 \newtheorem{lem}{Lemma}[section]  \newtheorem{thm}{Theorem}[section]
\numberwithin{equation}{section}
\numberwithin{equation}{section}
 \DeclareMathAlphabet{\mathsfsl}{OT1}{cmss}{m}{sl} \DeclareMathAlphabet{\mathpzc}{OT1}{pzc}{m}{it}
    \newcommand{\rr}{\mathbb{R}}
\newcommand{\pp}{\mathbb{P}}    
    \def\FF{\mathcal F}
 \def\d"{^{\prime\prime}} \def\d'{^{\prime}}
\begin{document}

\title[]{Central Limit Theorem and Moderate deviation for nonhomogenenous Markov chains}
\thanks{The research  is supported by  Scientific Program of Department of Education of Jiang Xi Province of China  Grant GJJ150894} \subjclass[2000]{60F10,05C80}
\keywords{Central limit Theorem; Moderate deviation; Nonhomogeneous Markov chain; Martingle}
\date{} \maketitle

\begin{center}

XU Mingzhou~\footnote{Email: mingzhouxu@whu.edu.cn}\quad DING Yunzheng~
  \quad ZHOU Yongzheng~\footnote{Email:zhyzh\_ty@163.com} \\

 School of Information Engineering, Jingdezhen Ceramic Institute\\
  Jingdezhen 333403, China

\end{center}
\begin{abstract}
Our purpose is to prove central limit theorem for countable nonhomogeneous Markov chain  under the condition of uniform convergence of transition probability matrices for countable nonhomogeneous Markov chain in Ces\`aro sense. Furthermore, we obtain a corresponding moderate deviation theorem  for countable nonhomogeneous Markov chain by G\"artner-Ellis theorem and exponential equivalent method.
\end{abstract}

\section{Introduction and main results}
Huang et al. {\cite{H2013}} proved central limit theorem for nonhomogeneous Markov chain with finite state space. Gao \cite{Gao1992} obtained moderate deviation principles for homogeneous Markov chain. De Acosta \cite{deAcosta1997} studied moderate deviations lower bounds for homogeneous Markov chain. De Acosta and Chen \cite{deAcosta1998} established moderate deviations upper bounds for homogeneous Markov chain. It is natural and important to study central limit theorem and moderate deviation for countable nonhomogeneous Markov chain. We wish to investigate a central limit theorem and moderate deviation for countable nonhomogeneous Markov chain under the condition of uniform convergence of transition probability matrices for countable nonhomogeneous Markov chain in Ces\`aro sense.

Suppose that $\{X_n,n\ge 0\}$ is a nonhomogeneous Markov chain taking values in  $S=\{1,2,\ldots\}$ with initial probability
\begin{equation}\label{1}
\mu^{(0)}=(\mu(1),\mu(2),\ldots)
\end{equation}
and the transition matrices
\begin{equation}\label{2}
P_n=(p_n(i,j)),\mbox{  }i,j\in S, n\ge 1,
\end{equation}
where $p_n(i,j)=\pp(X_n=j|X_{n-1}=i)$. Write
$$
P^{(m,n)}=P_{m+1}P_{m+2}\cdots P_{n}, p^{(m,n)}(i,j)=\pp(X_n=j|X_m=i),
$$
$$
\mu^{(k)}=\mu^{(0)}P_1P_2\cdots P_k, \mu^{(k)}(j)=\pp(X_k=j).
$$When the Markov chain is homogeneous,  $P$,  $P^k$ denote $P_n$, $P^{(m,m+k)}$ respectively.

If $P$ is a stochastic matrix, then we write
$$
\delta(P)=\sup_{i,k}\sum_{j=1}^{\infty}[p(i,j)-p(k,j)]^{+},
$$
where $[a]^{+}=\max\{0,a\}$.

Let $A=(a_{ij})$ be a matrix defined as $S\times S$. Write
$$
\|A\|=\sup_{i\in S}\sum_{j\in S}|a_{ij}|.
$$

If $h=(h_1,h_2,\ldots)$ , then we write $\|h\|=\sum_{j\in S}|h_j|$. If $g=(g_1,g_2,\ldots)'$ , then we write $\|g\|=\sup_{i\in S}|g_i|$. The properties below hold ( Yang \cite{Yang2002,Yang2009}):

(a)  $\|AB\|\le \|A\|\|B\|$ for all matrices $A$ and $B$;

(b) $\|P\|=1$ for all stochastic matrix $P$. 

Suppose that $R$ is a 'constant' stochastic matrix each row of which is the same. Then $\{P_n,n\ge 1\}$ is said to be strongly ergodic (with a constant stochastic matrix $R$) if for all $m\ge 0$
$$
\lim_{n\rightarrow\infty}\|P^{(m,m+n)}-R\|=0.
$$
The sequence $\{P_n,n\ge 1\}$ is said to converge in the Ces\`{a}ro sense (to a constant stochastic matrix $R$) if for every $m\ge 0$
$$
\lim_{n\rightarrow\infty}\left\|\sum_{t=1}^{n}P^{(m,m+t)}/n-R\right\|=0.
$$
The sequence $\{P_n,n\ge 1\}$ is said to uniformly converge in the Ces\`{a}ro sense (to a constant stochastic matrix $R$) if
\begin{equation}\label{3}
\lim_{n\rightarrow\infty}\sup_{m\ge 0}\left\|\sum_{t=1}^{n}P^{(m,m+t)}/n-R\right\|=0.
\end{equation}

 $S$ is divided into $d$ disjoint subspaces $C_0$, $C_1$, $\ldots$, $C_{d-1}$, by an irreducible stochastic matrix $P$, of period $d$ ($d\ge 1$) (see Theorem 3.3 of Hu \cite{Hu1983}), and $P^d$ gives $d$ stochastic matrices $\{T_l,0\le l\le d-1\}$, where $T_l$ is defined on $C_l$. As in Bowerman, et al. \cite{Bowerman1977} and Yang \cite{Yang2002}, we shall discuss such an irreducible stochastic matrix $P$, of period $d$  that $T_l$ is strongly ergodic for $l=0,1,\ldots,d-1$. This matrix will be called periodic strongly ergodic.

\begin{thm}\label{thm1} Suppose $\{X_n,n\ge0\}$ is a countable nonhomogeneous Markov chain taking values in $S=\{1,2,\ldots\}$ with initial distribution of (\ref{1}) and transition matrices of (\ref{2}). Assume that $f$ is a real function satisfying $|f(x)|\le M$ for all $x\in \rr$. Suppose that $P$ is a periodic strongly ergodic stochastic matrix. Assume that $R$ is a constant stochastic matrix each row of which is the left eigenvector $\pi=(\pi(1),\pi(2),\ldots)$ of $P$ satisfying $\pi P=\pi$ and $\sum_{i}\pi(i)=1$. Assume that
\begin{equation}\label{4}
\lim_{n\rightarrow\infty}\sup_{m\ge 0}\frac1n\sum_{k=1}^{n}\|P_{k+m}-P\|=0,
\end{equation}
and
\begin{equation}\label{5}
\theta=\sum_{i\in S}\pi(i)[f^2(i)-(\sum_{j\in S}f(j)q(i,j))^2]>0.
\end{equation}
Moreover, if the sequence of $\delta$-coefficient satisfies
\begin{equation}\label{6}
\lim_{n\rightarrow\infty}\frac{\sum_{k=1}^{n}\delta(P_k)}{\sqrt{n}}=0,
\end{equation}
then we have
\begin{equation}\label{7}
\frac{S_n-E(S_n)}{\sqrt{n\theta}}\stackrel{D}{\Rightarrow}N(0,1)
\end{equation}
where $S_n=\sum_{k=1}^{n}f(X_k)$, $\stackrel{D}{\Rightarrow}$ stands for the convergence in distribution.
\end{thm}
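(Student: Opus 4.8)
The plan is to establish (\ref{7}) by a martingale approximation followed by the martingale central limit theorem. Working with the natural filtration $\FF_k=\sigma(X_0,X_1,\ldots,X_k)$, I would first introduce the martingale differences
$$
D_k=f(X_k)-\ee[f(X_k)\mid\FF_{k-1}]=f(X_k)-(P_kf)(X_{k-1}),\qquad (P_kf)(i)=\sum_{j\in S}f(j)p_k(i,j),
$$
where the second equality uses the Markov property. Each $|D_k|\le 2M$, and $M_n=\sum_{k=1}^nD_k$ is a martingale with respect to $\{\FF_k\}$. Since $\ee(S_n)=\sum_{k=1}^n\ee[(P_kf)(X_{k-1})]$, this yields the exact decomposition
$$
S_n-\ee(S_n)=M_n+R_n,\qquad R_n=\sum_{k=1}^n\bigl[(P_kf)(X_{k-1})-\ee(P_kf)(X_{k-1})\bigr].
$$

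Next I would show that the remainder is negligible after normalization, and this is exactly where hypothesis (\ref{6}) enters. For any two states $i,i'$,
$$
|(P_kf)(i)-(P_kf)(i')|\le M\sum_{j\in S}|p_k(i,j)-p_k(i',j)|\le 2M\,\delta(P_k),
$$
because $p_k(i,\cdot)$ and $p_k(i',\cdot)$ are probability vectors, so the oscillation of $P_kf$ over $S$ is at most $2M\delta(P_k)$. Consequently $|(P_kf)(X_{k-1})-\ee(P_kf)(X_{k-1})|\le 2M\delta(P_k)$ pointwise, whence $|R_n|\le 2M\sum_{k=1}^n\delta(P_k)$ and therefore $R_n/\sqrt{n\theta}\to0$ by (\ref{6}). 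By Slutsky's theorem it then suffices to prove $M_n/\sqrt{n\theta}\Rightarrow N(0,1)$.

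For the martingale part I would invoke the martingale central limit theorem (e.g.\ Hall--Heyde) for the triangular array $D_{n,k}=D_k/\sqrt{n\theta}$. The conditional Lindeberg condition is automatic: since $|D_{n,k}|\le 2M/\sqrt{n\theta}\to0$ uniformly in $k$, the truncated second moments vanish for large $n$. The substantive requirement is the convergence of the conditional variance,
$$
\frac1n\sum_{k=1}^n\ee[D_k^2\mid\FF_{k-1}]=\frac1n\sum_{k=1}^n\Bigl[\sum_{j\in S}f^2(j)p_k(X_{k-1},j)-\bigl((P_kf)(X_{k-1})\bigr)^2\Bigr]\ \stackrel{\pp}{\longrightarrow}\ \theta.
$$
To obtain this I would first replace $p_k$ by the limit $p$, the error being controlled by $\tfrac1n\sum_{k=1}^n\|P_{k+m}-P\|\to0$ from (\ref{4}), and then apply an ergodic theorem for the nonhomogeneous chain asserting that the Ces\`aro occupation averages $\tfrac1n\sum_{k=1}^nh(X_{k-1})$ converge in probability to $\pi(h)=\sum_i\pi(i)h(i)$ for every bounded $h$; here the periodic strong ergodicity of $P$ together with (\ref{3}) guarantees that the Ces\`aro averages $\tfrac1n\sum_{k=1}^n\mu^{(k-1)}$ converge to $\pi$ and that the covariances $\cov(h(X_{k-1}),h(X_{l-1}))$ decay fast enough. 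Applying this with $h(i)=\sum_jf^2(j)p(i,j)-((Pf)(i))^2$ and using the stationarity identity $\sum_i\pi(i)\sum_jf^2(j)p(i,j)=\sum_j\pi(j)f^2(j)$ coming from $\pi P=\pi$, the limit collapses to $\sum_i\pi(i)[f^2(i)-(\sum_jf(j)q(i,j))^2]=\theta$, which is positive by (\ref{5}).

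I expect the main obstacle to be this last step: establishing the weak law of large numbers $\tfrac1n\sum_{k=1}^nh(X_{k-1})\to\pi(h)$ for the countable, \emph{nonhomogeneous}, periodic chain, together with the uniform replacement of $p_k$ by $p$. Unlike the homogeneous irreducible aperiodic case, one must simultaneously handle the periodicity (where only Ces\`aro averages stabilize), the time-dependence of the kernels, and the infinite state space; controlling the covariance sum in probability, rather than in $L^2$ on a fixed finite alphabet, is the delicate point and is precisely what the uniform Ces\`aro hypotheses (\ref{3})--(\ref{4}) are designed to supply.
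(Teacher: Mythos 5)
Your proposal is correct and follows essentially the same route as the paper: the same martingale decomposition $S_n-E(S_n)=W_n+R_n$, the same $2M\delta(P_k)$ bound making $R_n/\sqrt{n}$ negligible under (\ref{6}), and the same martingale CLT with the Lindeberg condition trivial from boundedness and the conditional variance identified via (\ref{4}) together with an occupation-time law of large numbers. The one step you flag as the main obstacle, namely $\frac1n\sum_{k=1}^n h(X_{k-1})\to\pi(h)$, is exactly what the paper imports as Lemma \ref{lem3} (the a.e.\ convergence $L_n(i)/n\to\pi(i)$, quoted from Yang's 2009 strong law for nonhomogeneous Markov chains), so no new argument is supplied there either.
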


\begin{thm}\label{thm2} Under the hypotheses of Theorem \ref{thm1}, if moreover
\begin{equation}\label{8}
\lim_{n\rightarrow\infty}\frac{a(n)}{\sqrt{n}}=\infty,\lim_{n\rightarrow\infty}\frac{a(n)}{n}=0.
\end{equation}
then for each open set $G\subset \rr^1$,
$$
\lim_{n\rightarrow\infty}\frac{n}{a^2(n)}\log\pp \left\{\frac{S_n-E(S_n)}{\sqrt{a(n)}}\in G\right\}\ge -\inf_{x\in G}I(x),
$$
and for each closed set $F\subset \rr^1$,
$$
\lim_{n\rightarrow\infty}\frac{n}{a^2(n)}\log\pp \left\{\frac{S_n-E(S_n)}{\sqrt{a(n)}}\in F\right\}\le -\inf_{x\in F}I(x),
$$
where $I(x):=\frac{x^2}{2\theta}$.
\end{thm}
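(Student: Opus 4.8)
The plan is to obtain the moderate deviation principle from the Gärtner--Ellis theorem, after reducing the problem to a martingale by the exponential-equivalence method. Writing the speed as $a^2(n)/n$ (so the prefactor is its reciprocal $n/a^2(n)$, and hypotheses (\ref{8}) make the speed tend to $\infty$ while remaining $o(1)$ relative to $n$), the entire argument reduces to identifying the limiting scaled cumulant generating function
$$
\Lambda(\lambda)\;:=\;\lim_{n\to\infty}\frac{n}{a^2(n)}\,\log\ee\exp\!\left(\frac{\lambda\,a(n)}{n}\bigl(S_n-\ee(S_n)\bigr)\right)
$$
and showing that $\Lambda(\lambda)=\tfrac12\theta\lambda^2$ for every $\lambda\in\rr$. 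Once this is established, $\Lambda$ is finite, differentiable and strictly convex on all of $\rr$, hence essentially smooth and lower semicontinuous; the Gärtner--Ellis theorem then delivers the lower bound over open sets and the upper bound over closed sets, with rate equal to the Legendre--Fenchel transform $\Lambda^{*}(x)=\sup_{\lambda}\{\lambda x-\tfrac12\theta\lambda^2\}=\tfrac{x^2}{2\theta}=I(x)$, which is a good rate function because $\theta>0$.

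First I would recycle the martingale decomposition already used in the proof of Theorem \ref{thm1}: write $S_n-\ee(S_n)=M_n+r_n$, where $M_n=\sum_{k=1}^{n}Y_k$ with bounded martingale differences $Y_k=f(X_k)-\ee[f(X_k)\mid X_{k-1}]$, $|Y_k|\le 2M$, and $r_n$ gathers the deviation of the one-step conditional means from their stationary counterparts. The $\delta$-coefficient hypothesis (\ref{6}) together with the Cesàro hypothesis (\ref{4}) force $r_n$ to be of smaller order than $a(n)$, and in fact
$$
\limsup_{n\to\infty}\frac{n}{a^2(n)}\,\log\pp\!\left(\frac{|r_n|}{a(n)}>\varepsilon\right)=-\infty
\qquad\text{for every }\varepsilon>0,
$$
so that $S_n-\ee(S_n)$ and $M_n$ are exponentially equivalent at this scale. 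By the exponential-equivalence lemma it then suffices to compute $\Lambda(\lambda)$ for the martingale $M_n$ alone.

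The core of the argument, and the step I expect to be the main obstacle, is the evaluation of $\Lambda(\lambda)$ for $M_n$. Setting $t_n:=\lambda\,a(n)/n\to0$ and conditioning successively on $\FF_{k-1}$, the bounds $|Y_k|\le 2M$ and $\ee[Y_k\mid\FF_{k-1}]=0$ give $\ee[e^{t_nY_k}\mid\FF_{k-1}]=\exp\!\bigl(\tfrac12 t_n^2\sigma_k^2+O(t_n^3)\bigr)$, where $\sigma_k^2=\ee[Y_k^2\mid X_{k-1}]=\sum_j f^2(j)q(X_{k-1},j)-\bigl(\sum_j f(j)q(X_{k-1},j)\bigr)^2$. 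Iterating over $k=1,\dots,n$ yields $\ee[e^{t_nM_n}]\approx\ee\exp\!\bigl(\tfrac12 t_n^2\sum_{k=1}^{n}\sigma_k^2\bigr)$, while the cubic remainder contributes $\tfrac{n}{a^2(n)}\cdot n\,O(t_n^3)=O\!\bigl(a(n)/n\bigr)\to0$, so the moderate scaling annihilates all higher cumulants. What remains is to show that the averaged conditional variance $\tfrac1n\sum_{k=1}^{n}\sigma_k^2$ concentrates at $\theta$ strongly enough to pass inside the exponential: the delicate point is the nonhomogeneity of the chain, which I would handle by invoking the uniform Cesàro convergence (\ref{3}) and (\ref{4}) to show that the occupation averages of the states converge to $\pi$, whence $\tfrac1n\sum_k\sigma_k^2\to\sum_i\pi(i)[f^2(i)-(\sum_j f(j)q(i,j))^2]=\theta$; replacing $\sigma_k^2$ by $\theta$ inside the exponential is then justified by a further exponential-equivalence estimate exploiting $0\le\sigma_k^2\le 4M^2$. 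Combining these ingredients,
$$
\Lambda(\lambda)=\lim_{n\to\infty}\frac{n}{a^2(n)}\cdot\frac12\,t_n^2\,(n\theta)=\frac12\theta\lambda^2,
$$
which completes the identification and, through Gärtner--Ellis, the proof.
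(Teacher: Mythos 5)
Your overall architecture is the same as the paper's: decompose $S_n-\ee(S_n)=W_n+r_n$ with $W_n$ the martingale from the proof of Theorem \ref{thm1}, kill $r_n=\sum_{k=1}^n[\,\ee[f(X_k)|X_{k-1}]-\ee[f(X_k)]\,]$ by exponential equivalence (here you can even note that $|r_n|\le 2M\sum_{k=1}^n\delta(P_k)=o(\sqrt n)=o(a(n))$ holds pathwise, so the offending probability is eventually zero), and conclude via G\"artner--Ellis with $\Lambda(\lambda)=\tfrac12\theta\lambda^2$. Where you diverge is the martingale step: the paper does not expand the conditional moment generating function by hand, it invokes Gao's Theorem 1.1 on moderate deviations for martingales, whose hypotheses are (i) a uniform conditional exponential moment bound (trivial here since $|D_k|\le 2M$) and (ii) an $L^{\infty}(\pp)$-norm convergence of the averaged conditional variances to $\theta$ (the paper's (\ref{2.20}), extracted from (\ref{2.13}) and uniform boundedness).

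The gap in your version is precisely at the point you flag as delicate. After writing $\ee[e^{t_nW_n}]\approx\ee\exp\bigl(\tfrac12t_n^2\sum_{k=1}^n\sigma_k^2\bigr)$ with $t_n=\lambda a(n)/n$, you propose to replace $\tfrac1n\sum_k\sigma_k^2$ by $\theta$ using only (a) almost sure / in-probability convergence of $\tfrac1n V(W_n)$ to $\theta$ and (b) the bound $0\le\sigma_k^2\le 4M^2$. That is not enough. Writing $Z_n=\tfrac1n\sum_k\sigma_k^2-\theta$ and $c_n=\lambda^2a^2(n)/(2n)\to\infty$, you need
$$
\frac1{c_n}\log\ee\,e^{c_nZ_n}\longrightarrow 0,
$$
and splitting on $\{|Z_n|\le\epsilon\}$ versus $\{|Z_n|>\epsilon\}$ shows that boundedness reduces this to requiring $\frac1{c_n}\log\pp(|Z_n|>\epsilon)\to-\infty$, i.e.\ \emph{superexponential} convergence of the conditional variance at the moderate deviation speed $a^2(n)/n$ --- convergence in probability plus an $L^\infty$ bound on the variable itself does not deliver this. (A uniformly bounded $Z_n\to0$ a.e.\ can still have $\pp(|Z_n|>\epsilon)$ decaying only polynomially, which destroys the limit above.) This is exactly the hypothesis that Gao's theorem isolates and that the paper discharges through the $L^{\infty}(\pp)$-norm statement (\ref{2.20}); your proof needs either that statement or the superexponential estimate as an explicit ingredient, and neither follows from what you have written. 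A secondary, more cosmetic issue: the ``iteration'' $\ee[e^{t_nW_n}]\approx\ee\exp(\tfrac12t_n^2\sum_k\sigma_k^2)$ multiplies \emph{random} conditional factors, so making the $O(t_n^3)$ bookkeeping rigorous already requires the same uniform control; it is cleaner to cite the martingale moderate deviation theorem as the paper does.
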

In sections 2 and 3, we prove Theorems \ref{thm1} and \ref{thm2}. The ideas of proofs of Theorem \ref{thm1} come from Huang et al. \cite{H2013} and Yang \cite{Yang2002}.
\section{Proof of Theorem \ref{thm1}}
Let
\begin{equation}\label{2.1}
D_n=f(X_n)-E[f(X_n)|X_{n-1}], n\ge 1, \mbox{  } D_0=0
\end{equation}
\begin{equation}\label{2.2}
W_n=\sum_{k=1}^{n}D_k.
\end{equation}
Write $\FF_n=\sigma(X_k,0\le k\le n)$. Then $\{W_n,\FF_n,n\ge1\}$ is a martingale, so that $\{D_n,\FF_n,n\ge 0\}$ is the related martingale difference. For $n=1,2,\ldots$, set
$$
V(W_n):=\sum_{k=1}^{n}E[D_k^2|\FF_{k-1}],
$$
and
$$
v(W_n):=E[V(W_n)].
$$
It is clear that
$$
v(W_n)=E[W_n^2]=E[V(W_n)].
$$
As in Huang et al. \cite{H2013}, to prove Theorem \ref{thm1}, we first state the central limit theorem associated with the stochastic sequence of $\{W_n\}_{n\ge 1}$, which is a key step to establish Theorem \ref{thm1}.
\begin{lem}\label{lem1} Assume $\{X_n,n\ge0\}$ is a countable nonhomogeneous Markov chain taking values in $S=\{1,2,\ldots\}$ with initial distribution of (\ref{1}) and transition matrices of (\ref{2}). Suppose $f$ is a real function satisfying $|f(x)|\le M$ for all $x\in \rr$. Assume that $P$ is a periodic strongly ergodic stochastic matrix, and $R$ is a constant stochastic matrix each row of which is the left eigenvector $\pi=(\pi(1),\pi(2),\ldots)$ of $P$ satisfying $\pi P=\pi$ and $\sum_{i}\pi(i)=1$. Suppose that (\ref{4}) and (\ref{5}) are satisfied, and $\{W_n,n\ge0\}$ is defined by (\ref{2.2}).Then
\begin{equation}\label{2.3}
\frac{W_n}{\sqrt{n\theta}}\stackrel{D}{\Rightarrow}N(0,1),
\end{equation}
where $\stackrel{D}{\Rightarrow}$ stands for the convergence in distribution.
\end{lem}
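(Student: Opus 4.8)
The plan is to deduce (\ref{2.3}) from a martingale central limit theorem (Brown's theorem, or the central limit theorem for martingale arrays used in Huang et al.) applied to the array $\{D_k/\sqrt{n\theta}\}_{k\le n}$. For such a theorem it suffices to check two things: a conditional Lindeberg condition, and the convergence of the quadratic variation $V(W_n)/(n\theta)\to 1$ in probability.

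First I would rewrite the conditional variance via the Markov property. Since $D_k$ depends only on $(X_{k-1},X_k)$, we have $E[D_k^2\mid\FF_{k-1}]=E[D_k^2\mid X_{k-1}]=h_k(X_{k-1})$, where $h_k(i):=\sum_j p_k(i,j)f^2(j)-(\sum_j p_k(i,j)f(j))^2$ is the conditional variance of $f(X_k)$ given $X_{k-1}=i$. Because $|f|\le M$, each $h_k$ is bounded by $M^2$ and $|D_k|\le 2M$; hence the Lindeberg condition is automatic, since $\max_{k\le n}|D_k|/\sqrt{n\theta}\le 2M/\sqrt{n\theta}\to 0$ and the truncated conditional second moments vanish for $n$ large.

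The substantive point is to show $\frac1n\sum_{k=1}^n h_k(X_{k-1})\to\theta$ in probability, which I would split into a mean and a fluctuation. For the mean, $v(W_n)/n=\frac1n\sum_{k=1}^n\sum_i\mu^{(k-1)}(i)h_k(i)$; the periodic strong ergodicity together with (\ref{4}) gives the Ces\`aro convergence of $\mu^{(k-1)}$ to $\pi$ and of $p_k$ to $p$, so $v(W_n)/n$ tends to $\sum_i\pi(i)[\sum_j p(i,j)f^2(j)-(\sum_j p(i,j)f(j))^2]$. The apparent discrepancy with (\ref{5}) is resolved by interchanging summations and using $\pi P=\pi$: since $\sum_i\pi(i)\sum_j p(i,j)f^2(j)=\sum_j f^2(j)\sum_i\pi(i)p(i,j)=\sum_j\pi(j)f^2(j)$, this limit is exactly $\theta$.

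For the fluctuation I would bound $E[(V(W_n)/n-v(W_n)/n)^2]$ and show it tends to $0$, so that $V(W_n)/n$ concentrates at its mean. Writing $\xi_k:=h_k(X_{k-1})-E[h_k(X_{k-1})]$, the diagonal terms of $E[(\frac1n\sum_k\xi_k)^2]$ contribute $O(1/n)$ by boundedness, while the off-diagonal covariances $E[\xi_k\xi_l]$ with $k<l$ are estimated by conditioning on $\FF_k$ and comparing $E[h_l(X_{l-1})\mid X_k]=\sum_j p^{(k,l-1)}(X_k,j)h_l(j)$ with $E[h_l(X_{l-1})]$; the strong ergodicity of the blocks $T_l$ makes $\|P^{(k,l-1)}-R\|$ small as $l-k$ grows, yielding a summable decay of correlations. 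This is the step I expect to be the main obstacle, because it is where the nonhomogeneity genuinely intervenes and the uniform Ces\`aro/strong-ergodicity hypotheses must be quantified with care; the tools are the bound $|h_k|\le M^2$ and the submultiplicativity $\|AB\|\le\|A\|\|B\|$. Once $V(W_n)/(n\theta)\to1$ in probability and the Lindeberg condition are established, the martingale central limit theorem delivers $W_n/\sqrt{n\theta}\stackrel{D}{\Rightarrow}N(0,1)$, proving (\ref{2.3}).
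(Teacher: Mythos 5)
Your overall architecture coincides with the paper's: both apply Brown's martingale central limit theorem (Lemma \ref{lem2}), both dispose of the Lindeberg condition by the uniform bound $|D_k|\le 2M$, and both reduce the problem to showing $V(W_n)/n\to\theta$ in probability, with the same identification of the limit via $\pi P=\pi$ (your computation of the mean matches the paper's treatment of $I_1(n)$ and $I_2(n)$). The divergence is in how that convergence is obtained, and this is where your proposal has a genuine gap. The paper does not use a second-moment/covariance-decay argument at all: it invokes Lemma \ref{lem3} (Yang's occupation-time law of large numbers, $\frac1n L_n(i)\to\pi(i)$ a.e.), writes $I_1(n)$ and $I_2(n)$ as weighted sums of $\frac1n\sum_{k}p_k(i,j)\delta_i(X_{k-1})$, replaces $p_k$ by $p$ using (\ref{4}), and concludes almost sure (hence in-probability) convergence of $V(W_n)/n$; uniform integrability then gives $v(W_n)/n\to\theta$ and $V(W_n)/v(W_n)\to1$.

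The step you yourself flag as the main obstacle is indeed where your route breaks. You propose to control the off-diagonal covariances by the claim that strong ergodicity of the blocks $T_l$ makes $\|P^{(k,l-1)}-R\|$ small as $l-k$ grows. That is false in general under the stated hypotheses: for a periodic strongly ergodic $P$ with period $d>1$, the powers $P^n$ do not converge to $R$ (only their Ces\`aro averages do), and condition (\ref{4}) is itself only of Ces\`aro type, so $P^{(k,l-1)}$ need not approach $R$ however large $l-k$ is. Consequently the correlations $E[\xi_k\xi_l]$ need not decay individually, and the advertised ``summable decay'' is unavailable; one would have to exploit cancellation after averaging over $l$ (e.g.\ over residue classes modulo $d$, or via the uniform Ces\`aro convergence (\ref{3})), which is essentially the content of the cited Lemma \ref{lem3}. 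So either carry out that Ces\`aro-averaged covariance estimate in detail, or do as the paper does and quote the occupation-time law of large numbers; as written, the concentration of $V(W_n)/n$ about its mean is asserted rather than proved, and the specific mechanism offered for it does not work in the periodic case.
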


As in Huang et al. \cite{H2013}, to establish Lemma \ref{lem1}, we need two important statements below such as Lemma 2.2 (see Brown \cite{Brown1971}) and Lemma 2.3 (see Yang \cite{Yang2009}).
\begin{lem}\label{lem2}
Assume that $(\Omega,\FF,\pp)$ is a probability space, and $\{\FF_n,n=1,2,\ldots\}$ is an increasing sequence of $\sigma$-algebras. Suppose that $\{M_n,\FF_n,n=1,2,\ldots\}$ is a martingale, denote its related martingale difference by $\xi_0=0$, $\xi_n=M_n-M_{n-1}$ $(n=1,2,\ldots)$. For $n=1,2,\ldots$, write
$$
V(M_n)=\sum_{j=1}^{n}E[\xi^2_j|\FF_{j-1}],
$$
$$
v(M_n)=E[V(M_n)],
$$
where $\FF_0$ is the trivial $\sigma$-algebra. Assume that the following holds:\\
(i)
\begin{equation}\label{2.4}
\frac{V(M_n)}{v(M_n)}\stackrel{P}{\Rightarrow}1,
\end{equation}
(ii) the Lindeberg condition holds, i.e., for any $\epsilon>0$
$$
\lim_{n\rightarrow\infty}\frac{\sum_{j=1}^{n}E[\xi^2_jI(|\xi_j|\ge \epsilon\sqrt{v(M_n)})]}{v(M_n)}=0,
$$
where $I(\cdot)$ denotes the indicator function.
Then we have
\begin{equation}\label{2.5}
\frac{V(M_n)}{\sqrt{v(M_n)}}\stackrel{D}{\Rightarrow}N(0,1),
\end{equation}
where $\stackrel{P}{\Rightarrow}$, and $\stackrel{D}{\Rightarrow}$ denote convergence in probability, and in distribution respectively.
\end{lem}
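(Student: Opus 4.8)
The plan is to prove the normalized martingale $M_n/\sqrt{v(M_n)}$ converges in distribution to $N(0,1)$ (this is the intended conclusion of (\ref{2.5}); the displayed $V(M_n)$ in the numerator should read $M_n$, since $V(M_n)\ge 0$ cannot itself be asymptotically standard normal) by the method of characteristic functions, following the classical Lindeberg-type argument for martingale arrays. Write $s_n^2=v(M_n)$ and rescale the differences by setting $\xi_{nj}=\xi_j/s_n$, $\sigma_{nj}^2=E[\xi_{nj}^2\mid\FF_{j-1}]$, together with the partial sums $S_{nj}=\sum_{l=1}^{j}\xi_{nl}$ and $V_{nj}=\sum_{l=1}^{j}\sigma_{nl}^2$, so that $S_{nn}=M_n/s_n$ and $V_{nn}=V(M_n)/v(M_n)$. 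In this notation hypothesis (i) reads $V_{nn}\stackrel{P}{\Rightarrow}1$ and the Lindeberg condition (ii) reads $\sum_{j=1}^{n}E[\xi_{nj}^2 I(|\xi_{nj}|\ge\epsilon)]\to 0$ for every $\epsilon>0$. By L\'evy's continuity theorem it suffices to show $E[\exp(\mi t S_{nn})]\to\exp(-t^2/2)$ for every real $t$.

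The central device is the predictable exponential $Y_{nj}=\exp(\mi t S_{nj}+\tfrac12 t^2 V_{nj})$, engineered so that $V_{nj}$ is $\FF_{j-1}$-measurable. Conditioning on $\FF_{j-1}$ and using $V_{nj}=V_{n,j-1}+\sigma_{nj}^2$ gives
$$
E[Y_{nj}\mid\FF_{j-1}]=Y_{n,j-1}\,\exp\!\big(\tfrac12 t^2\sigma_{nj}^2\big)\,E\big[\exp(\mi t\xi_{nj})\mid\FF_{j-1}\big].
$$
I would then expand the conditional characteristic function as $E[\exp(\mi t\xi_{nj})\mid\FF_{j-1}]=1-\tfrac12 t^2\sigma_{nj}^2+R_{nj}$, where the remainder is controlled by the standard inequality $|e^{\mi x}-1-\mi x+\tfrac12 x^2|\le C\min(x^2,|x|^3)$; the Lindeberg condition is exactly what forces $\sum_j E|R_{nj}|\to 0$ after splitting at $|\xi_{nj}|\ge\epsilon$. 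Multiplying by $\exp(\tfrac12 t^2\sigma_{nj}^2)=1+\tfrac12 t^2\sigma_{nj}^2+O(\sigma_{nj}^4)$ shows the correction factor equals $1+\epsilon_{nj}$ with $\sum_j E|\epsilon_{nj}|\to 0$, so that $Y_{nj}$ is an asymptotic martingale and $E[Y_{nn}]\to E[Y_{n0}]=1$.

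To pass from $E[Y_{nn}]\to 1$ to the stated limit I would write $E[\exp(\mi t S_{nn})]=E\big[Y_{nn}\exp(-\tfrac12 t^2 V_{nn})\big]$; since $V_{nn}\stackrel{P}{\Rightarrow}1$ the factor $\exp(-\tfrac12 t^2 V_{nn})$ converges in probability to the constant $\exp(-t^2/2)$, and combining this with $E[Y_{nn}]\to 1$ yields $E[\exp(\mi t S_{nn})]\to\exp(-t^2/2)$, completing the proof.

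The main obstacle, and the point requiring genuine care, is the uniform control of the factor $\exp(\tfrac12 t^2 V_{nj})$: because $V_{nn}$ need not be bounded, $Y_{nn}$ is neither automatically integrable nor uniformly integrable, so the step $E[Y_{nn}]\to 1$ (and its combination with the random factor above) cannot be justified as stated. The standard remedy I would adopt is a stopping/truncation argument: stop the array at the first index where $V_{nj}$ first exceeds a fixed level $1+\eta$, observe that by $V_{nn}\stackrel{P}{\Rightarrow}1$ the stopped sum agrees with $M_n/s_n$ except on an event of vanishing probability, carry out the expansion above for the now bounded stopped exponential, and finally let $\eta\downarrow 0$. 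Reconciling this truncation with the Lindeberg bound on the remainders $R_{nj}$ is the technical heart of the matter; the remaining estimates are routine Taylor expansion and bounded convergence.
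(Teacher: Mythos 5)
The paper does not prove this lemma at all: it is imported verbatim from Brown (1971) as a known martingale central limit theorem, so there is no internal proof to compare yours against. Your reading of the statement is the right one --- the numerator in (\ref{2.5}) must be $M_n$, not $V(M_n)$ --- and your plan is essentially Brown's own argument (the one reproduced as Theorem 3.2 in Hall--Heyde): normalize by $s_n=\sqrt{v(M_n)}$, form the predictable exponential $Y_{nj}=\exp(\mi tS_{nj}+\tfrac12 t^2V_{nj})$, cancel the conditional characteristic function against the compensator using the Lindeberg estimates, and convert $E[Y_{nn}]\to1$ into convergence of $E[e^{\mi tS_{nn}}]$ via $V_{nn}\stackrel{P}{\Rightarrow}1$.

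That said, what you have written is a roadmap rather than a proof: you yourself flag that the decisive step --- integrability of $Y_{nn}$ and the passage from $E[Y_{nn}]\to 1$ to $E\big[Y_{nn}e^{-\frac12 t^2 V_{nn}}\big]\to e^{-t^2/2}$ --- ``cannot be justified as stated,'' and you defer it to an unspecified stopping argument. To close the gap you should make the truncation explicit: replace $\xi_{nj}$ by $\hat\xi_{nj}=\xi_{nj}I(V_{nj}\le 1+\eta)$; since $V_{nj}$ is $\FF_{j-1}$-measurable this is a predictable modification and the martingale property survives, and since the indicator is nonincreasing in $j$ the truncated compensator never exceeds $1+\eta$, so the truncated exponential is bounded by $e^{t^2(1+\eta)/2}$ and every expectation you take is legitimate. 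Hypothesis (i) gives $\pp(\hat S_{nn}\ne S_{nn})\le\pp(V_{nn}>1+\eta)\to0$, and the Lindeberg condition (which also yields $\max_j\sigma_{nj}^2\stackrel{P}{\Rightarrow}0$, needed to control the overshoot at the truncation time and the $\sum_jE[\sigma_{nj}^4]$ term in your Taylor expansion) applies unchanged to the truncated array; letting $\eta\downarrow0$ then finishes. With that paragraph actually written out the argument is complete and standard; without it the proposal is an accurate outline of Brown's proof, not a proof.
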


Write $\delta_i(j)=\delta_{ij}$, $(i,j\in S)$. Set
$$
L_n(i)=\sum_{k=0}^{n-1}\delta_i(X_k).
$$
\begin{lem}[]\label{lem3} Assume that $\{X_n,n\ge 0\}$ is a countable nonhomogeneous Markov chain taking values in $S=\{1,2,\ldots\}$ with initial distribution $(\ref{1})$, and transition matrices $(\ref{2})$. Suppose that $P$ is a periodic strongly ergodic stochastic matrix, and $R$ is matrix each row of which is the left eigenvector $\pi=(\pi(1),\pi(2),\cdots)$ of $P$ satisfying $\pi P=\pi$ and $\sum_{i}\pi(i)=1$. Assume (\ref{4}) holds. Then
\begin{equation}\label{2.6}
\lim_{n\rightarrow\infty}\frac1n L_n(i)=\pi(i) \mbox{       } a.e..
\end{equation}
\end{lem}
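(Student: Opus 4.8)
The plan is to regard $L_n(i)/n$ as the normalized sum of the bounded indicators $\delta_i(X_k)$ and to prove almost-everywhere convergence by the classical route: convergence in mean, a second-moment (variance) estimate, Borel--Cantelli along a subsequence, and a monotonicity argument. A tempting shortcut is to subtract the conditional means $E[\delta_i(X_k)\mid\FF_{k-1}]=p_k(X_{k-1},i)$ and dispatch the resulting bounded martingale differences by the martingale strong law; but the leftover drift $\frac1n\sum_{k=1}^{n}p_k(X_{k-1},i)$, after replacing $P_k$ by $P$ via (\ref{4}), is again the time average of a bounded function of the chain against its $\pi$-mean --- an instance of the very statement to be proved --- so this route does not close. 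I therefore argue directly through moments, using throughout that $n\mapsto L_n(i)$ is nondecreasing and writing $\mu^{(k)}(i)=\pp(X_k=i)$.

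First I would show the mean convergence $\frac1n E[L_n(i)]=\frac1n\sum_{k=0}^{n-1}\mu^{(k)}(i)\to\pi(i)$. Since $\mu^{(k)}=\mu^{(0)}P^{(0,k)}$ and every row of $R$ equals $\pi$ while $\mu^{(0)}$ is a probability vector, one has $\mu^{(0)}R=\pi$; hence it suffices that the composite matrices converge in the Ces\`aro sense, $\frac1n\sum_{t=1}^{n}P^{(0,t)}\to R$ (the boundary terms $t=0$ and the shift of the index range being $O(1/n)$ in $\|\cdot\|$). This is precisely what the periodic strong ergodicity of $P$ together with hypothesis (\ref{4}) provides, by the arguments of Yang \cite{Yang2002}; reading off the $i$-th coordinate and using $\|\mu^{(0)}B\|\le\|B\|$ with $B=\frac1n\sum_{t=1}^{n}P^{(0,t)}-R$ then gives the claim.

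The heart of the matter is the estimate $\var(L_n(i))=O(n)$. Writing $\var(L_n(i))=\sum_{k,l}\cov(\delta_i(X_k),\delta_i(X_l))$ and, for $k<l$, $\cov(\delta_i(X_k),\delta_i(X_l))=\mu^{(k)}(i)\big[p^{(k,l)}(i,i)-\mu^{(l)}(i)\big]$, one sees that $|\cov(\delta_i(X_k),\delta_i(X_l))|$ is bounded by the gap between the $i$-th row of $P^{(k,l)}$ and the $\mu^{(k)}$-mixture of its rows, hence by the contraction of the composite kernel across the lag $l-k$. The delicate issue is that $P$ is only \emph{periodic}: the coefficient $\delta(P^{(k,l)})$ need not decay in $l-k$, since rows anchored in distinct cyclic classes $C_0,\ldots,C_{d-1}$ remain far apart. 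I would resolve this with the periodic strong ergodicity in its working form --- each $T_l$ is strongly ergodic --- so that, up to leakage between phases measured by $\frac1n\sum_{k}\|P_k-P\|\to0$ from (\ref{4}), the summand $\delta_i(X_k)$ effectively lives on the single class $C_{\phi(i)}\ni i$; along the corresponding subsequence of times the chain is governed by the aperiodic strongly ergodic kernel $T_{\phi(i)}$, whose lagged coefficients decay summably, so the off-diagonal covariances sum to $O(n)$.

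Finally I would assemble these along $n_m=m^2$. Chebyshev's inequality and $\var(L_n(i))=O(n)$ give $\pp\big(|L_{n_m}(i)/n_m-E[L_{n_m}(i)]/n_m|>\epsilon\big)=O(m^{-2})$, which is summable, so Borel--Cantelli together with the mean convergence yields $L_{n_m}(i)/n_m\to\pi(i)$ a.e. For arbitrary $n$ with $n_m\le n<n_{m+1}$, monotonicity gives $L_{n_m}(i)/n_{m+1}\le L_n(i)/n\le L_{n_{m+1}}(i)/n_m$, and since $n_{m+1}/n_m\to1$ both ends converge to $\pi(i)$, establishing (\ref{2.6}). I expect the variance bound under the periodic hypothesis to be the principal obstacle: the composite kernels do not contract uniformly in the lag, so one must lean on the cyclic decomposition into strongly ergodic blocks $T_l$ while simultaneously controlling the phase leakage of the nonhomogeneous chain through the Ces\`aro condition (\ref{4}).
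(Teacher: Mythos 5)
The paper does not prove this lemma at all: it is quoted as a known result from Yang \cite{Yang2009}, where it is established by a martingale argument. Your proposal takes a different, second-moment route, and it founders exactly at the point you yourself flag as the principal obstacle: the bound $\var(L_n(i))=O(n)$ (or even $O(n^{2-\alpha})$) is never actually established, and hypothesis (\ref{4}) is too weak to deliver it by the means you sketch. Condition (\ref{4}) controls $\sum_{k}\|P_k-P\|$ only in Ces\`aro mean over windows; it gives no decay of $\delta(P^{(k,l)})$ in the lag $l-k$ (for periodic $P$ one has $\delta(P^t)=1$ for all $t$, and $\delta$ is merely submultiplicative with factors $\le 1$), and the approximation $\|P^{(k,l)}-P^{l-k}\|\le\sum_{j=k+1}^{l}\|P_j-P\|$ is useless for large lags, since that sum is only $o(l-k)$, not small. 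Your appeal to the cyclic decomposition does not repair this: the nonhomogeneous chain does not respect the classes $C_0,\dots,C_{d-1}$, the ``phase leakage'' is controlled only on Ces\`aro average, and no quantitative covariance estimate is derived from it. Without a polynomial variance bound, Chebyshev plus Borel--Cantelli along $n_m=m^2$ does not produce a summable series, and the whole a.e.\ conclusion collapses to (at best) an $L^2$ weak law.

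Your stated reason for rejecting the martingale route is also mistaken, and this matters because that route is the one that actually works (and is the one used in the cited reference). After the martingale SLLN kills $\frac1n\sum_{k}[\delta_i(X_k)-p_k(X_{k-1},i)]$ and (\ref{4}) replaces $p_k$ by $p$, the leftover drift is not ``an instance of the very statement to be proved'': it is $\frac1n\sum_{k=1}^{n}p(X_{k-1},i)=\frac1n(L_nP)(i)$ up to $O(1/n)$. One therefore obtains the a.e.\ relation $\frac{L_n}{n}-\frac{L_n}{n}P\rightarrow 0$, which can be iterated to $\frac{L_n}{n}-\frac{L_n}{n}P^t\rightarrow 0$ for each fixed $t$, averaged over $t=1,\dots,N$, and combined with the Ces\`aro convergence $\frac1N\sum_{t=1}^{N}P^t\rightarrow R$ (a consequence of periodic strong ergodicity) and the fact that $\frac{L_n}{n}$ is a probability vector, so that $\frac{L_n}{n}R=\pi$. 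The argument closes without ever needing covariance decay. As it stands, your proof has a genuine gap at its central estimate and discards the one decomposition that avoids the difficulty.
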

Now let's come to establish Lemma \ref{lem1}.

\begin{proof}[Proof of Lemma \ref{lem1}]
Applications of properties of the conditional expectation, and Markov chains yield
\begin{equation}\label{2.7}
\begin{aligned}
\frac{V(W_n)}{n}&=\frac1n\sum_{k=1}^{n}E[D_k^2|\FF_{k-1}]\\
&=\frac1n\sum_{k=1}^{n}\{E[f^2(X_k)|X_{k-1}]-[E[f(X_k)|X_{k-1}]]^2\}\\
&:=I_1(n)-I_2(n),
\end{aligned}
\end{equation}
where
\begin{equation}\label{2.8}
\begin{aligned}
I_1(n)&=\frac1n\sum_{k=1}^{n}E[f^2(X_k)|X_{k-1}]\\
&=\sum_{j\in S}\sum_{i\in S}f^2(j)\frac1n\sum_{k=1}^{n}p_k(i,j)\delta_i(X_{k-1}),
\end{aligned}
\end{equation}
and
\begin{equation}\label{2.9}
\begin{aligned}
I_2(n)&=\frac1n\sum_{k=1}^{n}[E[f(X_k)|X_{k-1}]]^2\\
&=\sum_{i\in S}\sum_{j,\ell\in S}f(j)f(\ell)\frac1n\sum_{k=1}^{n}p_k(i,j)p_k(i,\ell)\delta_i(X_{k-1}).
\end{aligned}
\end{equation}
We first use (\ref{4}), and Fubini's theorem to obtain
\begin{equation}\label{2.10}
\begin{aligned}&\lim_{n\rightarrow\infty}\sum_{i\in S}\frac1n\sum_{k=1}^{n}\sum_{j\in S}\delta_i(X_{k-1})|p_k(i,j)-p(i,j)|\\
&\le \lim_{n\rightarrow\infty}\frac1n \sum_{i\in S}\delta_i(X_{k-1})\sum_{k=1}^{n}\|P_k-P\|\\
&=\lim_{n\rightarrow\infty}\frac1n\sum_{k=1}^{n} \sum_{i\in S}\delta_i(X_{k-1})\|P_k-P\|\\
&\le \lim_{n\rightarrow\infty}\frac1n\sum_{k=1}^{n} \|P_k-P\|=0.
\end{aligned}
\end{equation}
Hence, it follows from (\ref{2.10}), and $\pi P=\pi$ that
\begin{equation}\label{2.11}
\begin{aligned}
\lim_{n\rightarrow\infty}I_1(n)&=\lim_{n\rightarrow\infty}\sum_{j\in S}\sum_{i\in S}f^2(j)\frac1n\sum_{k=1}^{n}p(i,j)\delta_i(X_{k-1})\\
&=\sum_{j\in S}\sum_{i\in S}f^2(j)p(i,j)\pi(i)\\
&=\sum_{j\in S}f^2(j)\pi(j),\mbox{     } a.e..
\end{aligned}
\end{equation}
We next claim that
\begin{equation}\label{2.12}
\lim_{n\rightarrow\infty}I_2(n)=\sum_{i\in S}\pi(i)\left[\sum_{j\in S}f(j)p(i,j)\right]^2,\mbox{    }  a.e..
\end{equation}
Indeed, we use (\ref{4}), and (\ref{2.9})  to have
$$
\begin{aligned}
&\left|I_2(n)-\sum_{i\in S}\sum_{j,\ell\in S}f(j)f(\ell)\frac1n\sum_{k=1}^{n}p(i,j)p(i,\ell)\delta_i(X_{k-1})\right|\\
&\le \left|\sum_{i\in S}\sum_{j,\ell\in S}f(j)f(\ell)\frac1n\delta_i(X_{k-1})\sum_{k=1}^{n}(p_k(i,j)-p(i,j))p_k(i,\ell)\right.\\
&\left.+\sum_{i\in S}\sum_{j,\ell\in S}f(j)f(\ell)\frac1n\delta_i(X_{k-1})\sum_{k=1}^{n}p(i,j)(p_k(i,\ell)-p(i,\ell))\right|\\
&\le M^2\left(\frac1n\sum_{k=1}^{n}\sum_{i\in S}\delta_i(X_{k-1})\|P_k-P\|+\frac1n\sum_{k=1}^{n}\sum_{i\in S}\delta_i(X_{k-1})\|P_k-P\|\right)\\
&\le 2M^2\frac1n\sum_{k=1}^{n}\|P_k-P\|\rightarrow0, \mbox{ as }n\rightarrow\infty.
\end{aligned}
$$
Thus, we use Lemma \ref{lem3} again to obtain
$$
\begin{aligned}
\lim_{n\rightarrow\infty}I_2(n)&=\sum_{i\in S}\sum_{j,\ell\in S}f(j)f(\ell)p(i,j)p(i,\ell)\lim_{n\rightarrow\infty}\frac1n\sum_{k=1}^{n}\delta_i(X_{k-1})\\
&=\sum_{i\in S}\sum_{j,\ell\in S}f(j)f(\ell)p(i,j)p(i,\ell)\pi(i)\\
&=\sum_{i\in S}\pi(i)[\sum_{j\in S}f(j)p(i,j)]^2  \mbox{  a.e.. }
\end{aligned}
$$
Therefore (\ref{2.12}) holds. Combining (\ref{2.11}), and (\ref{2.12}), results in
\begin{equation}\label{2.13}
\lim_{n\rightarrow\infty}\frac{V(W_n)}{n}=\sum_{i\in S}\pi(i)[f^2(i)-(\sum_{j\in S}f(j)p(i,j))^2] \mbox{   a.e. ,}
\end{equation}
which gives
\begin{equation}\label{2.14}
\lim_{n\rightarrow\infty}\frac{V(W_n)}{n}=\sum_{i\in S}\pi(i)[f^2(i)-(\sum_{j\in S}f(j)p(i,j))^2] \mbox{   in probability.}
\end{equation}
Since $\{V(W_n)/n,n\ge 1\}$ is uniformly bounded, $\{V(W_n)/n,n\ge 1\}$ is uniformly integrable. By applying the above two facts, and (\ref{5}) we have
\begin{equation}\label{2.16}
\lim_{n\rightarrow\infty}\frac{E[V(W_n)]}{n}=\sum_{i\in S}\pi(i)[f^2(i)-(\sum_{j\in S}f(j)p(i,j))^2] >0.
\end{equation}
Therefore we obtain
$$
\frac{V(W_n)}{v(W_n)}\stackrel{P}{\Rightarrow}1.
$$
Also note that
$$
\{D^2_n=[f(X_n)-E[f(X_n)|X_{n-1}]]^2\}
$$
is uiformly integrable. Thus
$$
\lim_{n\rightarrow\infty}\frac{\sum_{j=1}^{n}E[D^2_jI(|D_j|\ge\epsilon\sqrt{n} )]}{n}=0,
$$
which implies that the Lindeberg condition holds.  Application of Lemma \ref{lem2} yields (\ref{2.3}). This establishes Lemma \ref{lem1}.
\end{proof}
\begin{proof}[Proof of Theorem \ref{thm1}]
Note that
\begin{equation}\label{2.17}
S_n-E[S_n]=W_n+\sum_{k=1}^{n}[E[f(X_k)|X_{k-1}]-E[f(X_k)]].
\end{equation}
Write
$$
\pp(X_k=j)=P_k(j),j\in S.
$$
Let's evaluate the upper bound of $|E[f(X_k)|X_{k-1}]-E[f(X_k)]|$. In fact, we use the C-K formula of Markov chain to obtain
$$
\begin{aligned}
|E[f(X_k)|X_{k-1}]-E[f(X_k)]|&=\left|\sum_{j\in S}f(j)P_k(j|X_{k-1})-\sum_{j\in S}f(j)P_k(j)\right|\\
&\le \sup_{i}\left|\sum_{j\in S}f(j)\left[P_k(j|i)-\sum_{s}P_{k-1}(s)P_k(j|s)\right]\right|\\
&\le M\sup_{i}\sum_{j\in S}\left|P_k(j|i)-\sum_{s}P_{k-1}(s)P_k(j|s)\right|\\
&= M\sup_{i}\sum_{j\in S}\left|\sum_{s}P_{k-1}(s)P_k(j|i)-\sum_{s}P_{k-1}(s)P_k(j|s)\right|\\
&\le M\sup_{i}\sum_{s}P_{k-1}(s)\sup_{s}\sum_{j\in S}|P_k(j|i)-P_k(j|s)|\\
&= M\sup_{i,s}\sum_{j\in S}|P_k(j|i)-P_k(j|s)|\\
&=2M\delta(P_k),
\end{aligned}
$$
here
$$
\delta(P_k)=\sup_{i,s}\sum_{j\in S}[P_k(j|i)-P_k(j|s)]^{+}=\frac12\sup_{i,s}\sum_{j\in S}|P_k(j|i)-P_k(j|s)|.
$$
Application of (\ref{6}) yields
\begin{equation}\label{2.18}
\lim_{n\rightarrow\infty}\frac{\sum_{k=1}^{n}[E[f(X_k)|X_{k-1}]-E[f(X_k)]]}{\sqrt{n}}=0.
\end{equation}
Combining (\ref{6}), (\ref{2.3}), (\ref{2.17}), and (\ref{2.18}), results in (\ref{7}). This proves Theorem \ref{thm1}.
\end{proof}
\section{Proof of Theorem \ref{thm2}}
As in Xu et al. \cite{Xu2020}, we use Theorem 1.1 in Gao \cite{Gao1996}, and exponential equivalence methods to prove Theorem \ref{thm2}. By (\ref{2.13}),
$$
\begin{aligned}
&\liminf_{n\rightarrow\infty,\frac{n}{m}\rightarrow0}\sup_{j\ge 0}\frac{1}{m}t^2E\left(\sum_{i=1}^{m}D_{n+j+i,f}^2|\FF_j\right)-{t^2}\theta(f)\\
 &=\liminf_{n\rightarrow\infty,\frac{n}{m}\rightarrow0}\sup_{j\ge 0}E\left(\frac{1}{m}t^2E\left(\sum_{i=1}^{m}D_{n+j+i,f}^2|\FF_{n+j+i-1}\right)|\FF_j\right)-t^2\theta(f)=0\mbox {  a.e.,}
\end{aligned}
$$
 which by the uniform boundedness of $\frac{1}{m}\sum_{k=1}^{m}E\left(\sum_{i=1}^{m}D_{n+j+i,f}^2|\FF_j\right)$ implies
 \begin{equation}\label{2.20}
\liminf_{n\rightarrow\infty,\frac{n}{m}\rightarrow0}\sup_{j\ge 0}\left\|\frac{1}{m}t^2E\left(\sum_{i=1}^{m}D_{n+j+i,f}^2|\FF_j\right)-{t^2}\theta(f)\right\|_{L^{\infty}(\pp)}=0.
\end{equation}
It is obvious that there exists $ \delta>0$ such that
 \begin{align*}
&\sup_{m\ge0}\left\|E(\exp(\delta D_{m+1,f})|\FF_{m})\right\|_{L^{\infty}(\pp)}<\infty,
 \end{align*},
 which together with (\ref{2.20}), Theorem 1.1 in Gao \cite{Gao1996}, G\"artner-Ellis theorem implies that $W_n/a(n)$ satisfies the moderate deviation theorem with rate function
$I(x)=\frac{x^2}{2\theta}$.
It follows from (\ref{8}) and (\ref{2.16}) that $\forall \epsilon>0$,
\begin{align*}
&\lim_{n\rightarrow\infty}\frac{n}{a^2(n)}\log \pp\left(\left|\frac{S_n-E[S_n]}{a(n)}-\frac{W_n}{a(n)}\right|>\epsilon\right)\\
&=\lim_{n\rightarrow\infty}\frac{n}{a^2(n)}\log \pp\left(\left|\frac{\sum_{k=1}^{n}[E[f(X_k)|X_{k-1}]-E[f(X_k)]]}{a(n)}\right|>\epsilon\right)\\
&=0.
 \end{align*}
Thus, by the exponential equivalent method (see Theorem 4.2.13 of Dembo and Zeitouni \cite{Dembo1998}, Gao \cite{Gao2013}),  we see that $\{\frac{S_n-E[S_n]}{a(n)}\}$ satisfies the same moderate deviation theorem as $\{\frac{W_n}{a(n)}\}$ with rate function $I(x)=\frac{x^2}{2\theta}$. This completes the proof .

{\bf Acknowledgements.} The authors are very grateful to the referees for carefully reading the original
manuscript and giving valuable suggestions.

\end{document}